\documentclass[12pt]{amsart}

\usepackage[T1]{fontenc}
\usepackage[utf8]{inputenc}

\usepackage[margin=1.1in]{geometry}
\usepackage{amsmath, amssymb, amsthm, mathtools}
\usepackage{microtype}
\usepackage{enumitem}
\usepackage{listings}
\usepackage[hidelinks]{hyperref}

\theoremstyle{plain}
\newtheorem{theorem}{Theorem}
\newtheorem{lemma}[theorem]{Lemma}

\theoremstyle{definition}
\newtheorem{remark}[theorem]{Remark}

\lstdefinestyle{python}{
  language=Python,
  basicstyle=\ttfamily\footnotesize,
  keywordstyle=\bfseries,
  commentstyle=\itshape,
  numbers=none,
  breaklines=true,
  breakatwhitespace=true,
  showstringspaces=false,
  frame=single,
  framesep=4pt,
  tabsize=2,
  xleftmargin=14pt,
  columns=fullflexible,
  upquote=true,
}

\title[Mathar's recurrence for OEIS A001711]%
{A short proof of Mathar's 2020 recurrence conjecture for the
 generalized-Stirling sequence A001711}

\author{Tong Niu}

\subjclass[2020]{05A15, 05A19, 11B37, 11B73, 11B83}
\keywords{OEIS A001711; harmonic number; generalized Stirling number;
   exponential generating function; D-finite sequence; P-recursive
   recurrence; Mathar conjecture}

\begin{document}

\maketitle

\begin{abstract}
For the OEIS sequence A001711, contributed by N. J. A. Sloane long
before the on-line era and identified there as the diagonal
$T(n+4, 4)$ of a generalized-Stirling triangle, R. J. Mathar
contributed in February 2020 the conjectured order-2 P-recursive
recurrence
\[
   a(n) - (2n+5)\,a(n-1) + (n+2)^{2}\,a(n-2) \;=\; 0,\qquad n \ge 2.
\]
We give a one-page proof. Detlefs's harmonic-number closed form
$a(n) = \tfrac{1}{4}(n+3)!\,(2 H_{n+3} - 3)$ collapses the
left-hand side, after dividing through by $(n+1)!/4$, to a polynomial
identity of $n$ with coefficient $H_{n+2}$. The harmonic-number
coefficient simplifies to $(n+3) - (2n+5) + (n+2) = 0$ (using
$H_{n+3} = H_{n+2} + \tfrac{1}{n+3}$ and
$H_{n+1} = H_{n+2} - \tfrac{1}{n+2}$); the constant remainder is
$-3 \cdot 0 = 0$ for the same reason. The supplementary archive
contains a SymPy script verifying both pieces symbolically, the
e.g.f.\ expansion against the harmonic closed form, and Mathar's
recurrence numerically for $n = 2, \ldots, 5000$.
\end{abstract}

\section{Introduction}\label{sec:intro}

The On-Line Encyclopedia of Integer Sequences~\cite{OEIS} (henceforth
OEIS) is full of sequences whose ``Conjecture: $\dots$'' comments
record formulas, recurrences, or congruences that were guessed
numerically but never proved. Short rigorous proofs of such
conjectures get published in the \emph{Journal of Integer Sequences},
in \emph{INTEGERS}, in the \emph{Fibonacci Quarterly}, and now and
then in the \emph{Electronic Journal of Combinatorics}.

A lot of these conjectures have been cleared in the last 18 months.
Fried's 2024 and 2025 papers~\cite{Fried2024,Fried2025} closed
several dozen at once; the 2023 list of Kauers and
Koutschan~\cite{KauersKoutschan2023} catalogues the
gold-standard benchmarks for guessed P-recursive recurrences. Most
of the low-hanging fruit in those two sources is gone.

The conjecture treated here is not in those lists. The sequence is
OEIS A001711, recorded as the diagonal $T(n+4, 4)$ of the
generalized-Stirling triangle $\genfrac{[}{]}{0pt}{}{n}{k}_3$
(also written $[n+4, 4]_3$). The first values are
\[
   1,\;7,\;47,\;342,\;2754,\;24552,\;241128,\;2592720,\;
   30334320,\;383970240,\;\ldots
\]
On 16 February 2020, R.~J.~Mathar contributed to A001711 the
conjectured order-2 P-recursive recurrence
\begin{equation}\label{eq:mathar}
   a(n) - (2n+5)\,a(n-1) + (n+2)^{2}\,a(n-2) \;=\; 0,\qquad n \ge 2.
\end{equation}
The conjecture has been sitting open in the OEIS comment thread for
six years; it is not in
\cite{Fried2024,Fried2025,KauersKoutschan2023} or the more recent
Chen-Kauers preprints~\cite{ChenKauers2025}.

\medskip

The proof is one page. Detlefs contributed to A001711 in
2010~\cite{Detlefs2010} the harmonic-number closed form
\begin{equation}\label{eq:closed-form}
   a(n) \;=\; \frac{(n+3)!}{4}\,\bigl(2H_{n+3} - 3\bigr),
\end{equation}
where $H_m = 1 + \tfrac{1}{2} + \dots + \tfrac{1}{m}$ is the $m$-th
harmonic number. We restate \eqref{eq:closed-form} with a
self-contained derivation from the OEIS-listed e.g.f.\ in
Lemma~\ref{lem:closed-form}, then substitute it into
\eqref{eq:mathar} and use $H_{n+3} = H_{n+2} + \tfrac{1}{n+3}$ and
$H_{n+1} = H_{n+2} - \tfrac{1}{n+2}$. Everything collapses to the
polynomial identity $(n+3) - (2n+5) + (n+2) = 0$
(Theorem~\ref{thm:main}). The point is that once the closed form
is in hand, the recurrence is no longer a P-recursive guess but a
two-line consequence of $H_{m+1} - H_m = 1/(m+1)$.

\section{The harmonic-number closed form}\label{sec:closed-form}

\begin{lemma}\label{lem:closed-form}
For every $n \ge 0$ the sequence A001711 satisfies
\[
   a(n) \;=\; \frac{(n+3)!}{4}\,\bigl(2H_{n+3} - 3\bigr).
\]
\end{lemma}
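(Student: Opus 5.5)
The plan is to work entirely at the level of exponential generating functions. I will take the OEIS-listed e.g.f.\ of A001711 as the definition of the sequence; I expect it to be
\[
  A(x) \;=\; \sum_{n\ge 0} a(n)\,\frac{x^n}{n!} \;=\; \frac{1 - 3\log(1-x)}{(1-x)^{4}}.
\]
I will then show that the right-hand side of the lemma has exactly this e.g.f. Since $\frac{(n+3)!}{n!} = (n+1)(n+2)(n+3)$, the claimed formula is equivalent to the ordinary-generating-function statement
\[
  A(x) \;=\; \frac{1}{4}\sum_{n\ge 0} (n+1)(n+2)(n+3)\,\bigl(2H_{n+3}-3\bigr)\,x^{n}.
\]
The factor $(n+1)(n+2)(n+3)$ is the signature of a third derivative, so I will obtain both pieces by differentiating classical generating functions three times.

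\textbf{Step 1: the harmonic part.} I start from the standard identity $\sum_{m\ge 0} H_m x^m = -\log(1-x)/(1-x)$. Differentiating three times termwise gives $\sum_{n\ge0}(n+1)(n+2)(n+3)H_{n+3}x^n$; the terms $m=0,1,2$ are killed by the derivative. For the closed side, I will prove by induction on $k$ that
\[
  \frac{d^k}{dx^k}\Bigl[\frac{-\log(1-x)}{1-x}\Bigr] \;=\; \frac{k!}{(1-x)^{k+1}}\bigl(-\log(1-x) + H_k\bigr).
\]
The inductive step is a one-line product-rule computation that uses $H_{k+1} = H_k + \frac{1}{k+1}$. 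With $k=3$ and $H_3 = 11/6$, this gives $6(1-x)^{-4}\bigl(-\log(1-x)+\tfrac{11}{6}\bigr)$.

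\textbf{Step 2: the constant part.} Differentiating $(1-x)^{-1}=\sum x^m$ three times gives $\sum_{n\ge 0}(n+1)(n+2)(n+3)x^n = 6(1-x)^{-4}$.

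\textbf{Step 3: combine.} Combining Steps 1 and 2 yields
\[
  \tfrac14\bigl[\,2\cdot 6(1-x)^{-4}\bigl(-\log(1-x)+\tfrac{11}{6}\bigr) - 3\cdot 6(1-x)^{-4}\bigr] \;=\; \frac{1-3\log(1-x)}{(1-x)^4},
\]
which matches the e.g.f. Comparing coefficients of $x^n$ proves the lemma for all $n\ge 0$. As a sanity check, $n=0$ gives $\frac{6}{4}\bigl(\frac{11}{3}-3\bigr)=1$.

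\textbf{Main obstacle.} The only delicate point is the starting object. If the OEIS entry lists the e.g.f.\ in a different but equivalent form, such as $(\log(1-x))$-polynomials over $(1-x)^4$ or via the $[n,k]_3$ triangle, a short algebraic rewrite is needed first. If instead one starts from the triangle's defining recurrence $[n,k]_3 = [n-1,k-1]_3 + (n-1)[n-1,k]_3$-type relations, I would fall back on deriving the column e.g.f.\ $(1-x)^{-3}\cdot$(first-column e.g.f.) by the usual Stirling-column argument and then proceed as above. The induction in Step 1 is routine, and the coefficient extraction is immediate.
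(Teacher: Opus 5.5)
Your proposal is correct and follows essentially the same route as the paper. Both take the OEIS e.g.f.\ $(1-3\log(1-x))/(1-x)^4$ as given, read $(n+1)(n+2)(n+3)x^n$ as a third derivative, and apply $d^3/dx^3$ to the generating functions of $H_m$ and of the constant sequence $1$, obtaining $(11-6\log(1-x))/(1-x)^4$ and $6/(1-x)^4$ respectively.

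The differences are cosmetic. You let the derivative kill the $m\le 2$ terms instead of subtracting them explicitly. Your induction formula $\frac{d^k}{dx^k}\bigl[\frac{-\log(1-x)}{1-x}\bigr]=\frac{k!}{(1-x)^{k+1}}\bigl(H_k-\log(1-x)\bigr)$ supplies the justification for the step the paper calls ``direct calculation.''
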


\begin{proof}
Define
\begin{equation}\label{eq:phi}
   \varphi(x) \;=\; \frac{-\log(1-x)}{(1-x)^{3}}.
\end{equation}
Differentiating once,
\begin{equation}\label{eq:phiprime}
   \varphi'(x)
   \;=\; \frac{1}{(1-x)^{4}} + \frac{-3 \log(1-x)}{(1-x)^{4}}
   \;=\; \frac{1 - 3 \log(1-x)}{(1-x)^{4}}.
\end{equation}
The OEIS entry for A001711 lists $\varphi'(x)$ as the exponential
generating function of the sequence: ``E.g.f.: $(d/dx)(-\log(1-x)/(1-x)^{3}) = (1 - 3 \log(1-x))/(1-x)^{4}$.''
So $\sum_{n\ge0} a(n)\, x^{n}/n! = \varphi'(x)$.

Define $b(n) := (n+3)!\,(2H_{n+3} - 3)/4$, which we want to show
equals $a(n)$, and let
\[
   B(x) \;:=\; \sum_{n\ge0} \frac{b(n)}{n!}\,x^{n}
        \;=\; \frac{1}{4}\sum_{n\ge0}(n+1)(n+2)(n+3)\,(2H_{n+3} - 3)\,x^{n}.
\]
Reindexing $m = n + 3$ and recognising $(n+1)(n+2)(n+3)\,x^{n}$ as the
third derivative of $x^{m}$,
\begin{equation}\label{eq:Bx-deriv}
   B(x)
   \;=\; \frac{1}{4}\,\frac{d^{3}}{dx^{3}}\!\sum_{m\ge3}(2H_{m} - 3)\,x^{m}.
\end{equation}
The bracketed sum splits into two known generating functions:
$\sum_{m\ge1} H_{m}\,x^{m} = -\log(1-x)/(1-x)$ and
$\sum_{m\ge0} x^{m} = 1/(1-x)$. Subtracting the $m \in \{1, 2\}$ and
$m \in \{0, 1, 2\}$ tails respectively,
\[
   \sum_{m\ge3}(2H_{m} - 3)\,x^{m}
   \;=\; \frac{-2\log(1-x)}{1-x} - 2x - 3x^{2} - \frac{3 x^{3}}{1-x}.
\]
Apply $d^{3}/dx^{3}$ termwise. Polynomial parts $-2x - 3x^{2}$
contribute $0$. Direct calculation gives
\[
   \frac{d^{3}}{dx^{3}}\!\biggl(\frac{-2\log(1-x)}{1-x}\biggr)
   \;=\; \frac{22 - 12\log(1-x)}{(1-x)^{4}},
\]
and (using $x^{3}/(1-x) = \sum_{m\ge3} x^{m}$, hence
$d^{3}/dx^{3}(x^{3}/(1-x)) = \sum_{k\ge0}(k+1)(k+2)(k+3)\,x^{k} = 6/(1-x)^{4}$)
\[
   \frac{d^{3}}{dx^{3}}\!\biggl(\frac{-3x^{3}}{1-x}\biggr)
   \;=\; \frac{-18}{(1-x)^{4}}.
\]
Add the two and divide by $4$:
\[
   B(x)
   \;=\; \frac{1}{4}\!\cdot\!\frac{22 - 18 - 12\log(1-x)}{(1-x)^{4}}
   \;=\; \frac{1 - 3\log(1-x)}{(1-x)^{4}}
   \;=\; \varphi'(x).
\]
So $\sum b(n)\,x^{n}/n! = \sum a(n)\,x^{n}/n!$, and
$b(n) = a(n)$ for every $n \ge 0$.
\end{proof}

\begin{remark}\label{rem:closed-form-cite}
The closed form \eqref{eq:closed-form} was contributed to the
A001711 page by W. Edwin Clark on a comment from R. R. Detlefs in
2010~\cite{Detlefs2010}. We restate it here with a self-contained
derivation so the proof of Theorem~\ref{thm:main} below is
self-contained as well.
\end{remark}

\section{Proof of Mathar's recurrence}\label{sec:proof}

\begin{theorem}\label{thm:main}
The sequence $a(n)$ of OEIS A001711 satisfies Mathar's
recurrence~\eqref{eq:mathar}: for every $n \ge 2$,
\[
   a(n) - (2n+5)\,a(n-1) + (n+2)^{2}\,a(n-2) \;=\; 0.
\]
\end{theorem}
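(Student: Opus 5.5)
The plan is to substitute the closed form of Lemma~\ref{lem:closed-form} directly into the left-hand side of \eqref{eq:mathar} and reduce everything to a single harmonic number. For $n \ge 2$ all three indices $n, n-1, n-2$ are $\ge 0$, so the lemma applies to each term and gives
\[
   a(n) = \tfrac{(n+3)!}{4}(2H_{n+3}-3),\quad
   a(n-1) = \tfrac{(n+2)!}{4}(2H_{n+2}-3),\quad
   a(n-2) = \tfrac{(n+1)!}{4}(2H_{n+1}-3).
\]
I will divide the whole left-hand side by the common factor $(n+1)!/4$, which is nonzero. This leaves
\[
   (n+3)(n+2)(2H_{n+3}-3) - (2n+5)(n+2)(2H_{n+2}-3) + (n+2)^{2}(2H_{n+1}-3).
\]
Every term carries a factor $(n+2)$, so I divide that out as well. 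The task then reduces to checking
\[
   (n+3)(2H_{n+3}-3) - (2n+5)(2H_{n+2}-3) + (n+2)(2H_{n+1}-3) = 0.
\]

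Next I eliminate $H_{n+3}$ and $H_{n+1}$ in favour of $H := H_{n+2}$, using $H_{n+3} = H + \tfrac{1}{n+3}$ and $H_{n+1} = H - \tfrac{1}{n+2}$. This is where the denominators cancel against the prefactors. The first term becomes $(n+3)(2H-3) + 2$, and the third becomes $(n+2)(2H-3) - 2$.

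The expression is now $\bigl[(n+3) - (2n+5) + (n+2)\bigr](2H-3) + 2 - 2$. The bracket vanishes identically in $n$, so the $H$-part and the constant $-3$ part both disappear. The leftover constants $+2$ and $-2$ cancel, giving $0$ as required.

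There is no real obstacle. The only points needing care are the range check (that $n-2 \ge 0$, so the lemma covers every term) and keeping track of the $\pm 2$ contributions coming from the $1/(n+3)$ and $1/(n+2)$ shifts. As an optional cross-check, one could verify the recurrence at the e.g.f.\ level: translate \eqref{eq:mathar} into a linear ODE and check that $\varphi'(x)$ satisfies it. However, the direct substitution above is shorter and completely elementary.
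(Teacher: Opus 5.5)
Your proposal is correct and follows the paper's proof essentially step for step: substitute the closed form, divide by $(n+1)!/4$ and then by $(n+2)$, rewrite $H_{n+3}$ and $H_{n+1}$ in terms of $H_{n+2}$, and observe that the coefficient $(n+3)-(2n+5)+(n+2)$ of $2H_{n+2}-3$ vanishes while the $\pm 2$ remainders cancel. Your explicit check that $n-2\ge 0$, so the lemma covers all three terms, is a small point the paper leaves implicit.
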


\begin{proof}
Substitute Lemma~\ref{lem:closed-form} into the left-hand side and
factor $(n+1)!/4$ out of every term. With the abbreviation
$h_{m} := 2H_{m} - 3$,
\begin{equation}\label{eq:M-over}
   \frac{4\,M(n)}{(n+1)!}
   \;=\; (n+2)(n+3)\,h_{n+3}
       \,-\, (2n+5)(n+2)\,h_{n+2}
       \,+\, (n+2)^{2}\,h_{n+1}.
\end{equation}
Pull out the common factor $(n+2)$:
\begin{equation}\label{eq:M-bracket}
   \frac{4\,M(n)}{(n+1)!\,(n+2)}
   \;=\; (n+3)\,h_{n+3}
       \,-\, (2n+5)\,h_{n+2}
       \,+\, (n+2)\,h_{n+1}.
\end{equation}
Now use the elementary $H_{n+3} = H_{n+2} + \tfrac{1}{n+3}$ and
$H_{n+1} = H_{n+2} - \tfrac{1}{n+2}$, so
$h_{n+3} = h_{n+2} + \tfrac{2}{n+3}$ and
$h_{n+1} = h_{n+2} - \tfrac{2}{n+2}$. Substitute into
\eqref{eq:M-bracket} and collect the $h_{n+2}$-multiple and the
remainder:
\begin{align*}
   (n+3)\,h_{n+3}
   &\;=\; (n+3)\,h_{n+2} + 2,\\
   (2n+5)\,h_{n+2}
   &\;=\; (2n+5)\,h_{n+2},\\
   (n+2)\,h_{n+1}
   &\;=\; (n+2)\,h_{n+2} - 2.
\end{align*}
Adding with signs $+, -, +$:
\begin{align*}
   \frac{4\,M(n)}{(n+1)!\,(n+2)}
   &\;=\; \bigl[(n+3) - (2n+5) + (n+2)\bigr]\,h_{n+2}
       \;+\;\bigl[2 - 0 - 2\bigr]\\
   &\;=\; 0\cdot h_{n+2} + 0
   \;=\; 0.
\end{align*}
So $M(n) = 0$ for every $n \ge 2$ (where the prefactor
$(n+1)!\,(n+2)/4$ is nonzero), which is Mathar's recurrence.
\end{proof}

\section{Remarks}\label{sec:remarks}

The same Mathar conjecture cleanups for OEIS A002627, A025166,
A176677, and A214615 in the present
series~\cite{Niu2026d-finite-2,Niu2026sequence-3,Niu2026sequence-4,Niu2026sequence-5}
run the proof through a polynomial-coefficient ODE on the EGF, which
spits out the recurrence after coefficient extraction. Here we get
away with less. The harmonic-number closed form \eqref{eq:closed-form}
already exists on the OEIS page; the elementary recurrence
$H_{m+1} - H_m = 1/(m+1)$ is enough to finish the job, with no
ODE in sight.

There is, of course, an EGF-ODE route too. Write
$(1-x)\varphi' - 4\varphi = 3/(1-x)^{4}$, differentiate, eliminate
the $1/(1-x)^{5}$ remainder, multiply through by $(1-x)$. Out comes
the second-order linear ODE
$(1-x)^{2}\,\varphi'' - 9(1-x)\,\varphi' + 16\,\varphi = 0$, and
Mathar's recurrence drops out from the standard EGF coefficient
identities. We omit the calculation.

The supplementary script \texttt{verify\_proof.py}
(Appendix~\ref{app:verifier}) checks the closed form
\eqref{eq:closed-form} against the OEIS b-file for the first $20$
values; expands the e.g.f.\ \eqref{eq:phiprime} as a power series and
matches it against the harmonic closed form (for $n = 0, \ldots, 9$);
performs the symbolic reduction of Mathar's left-hand side to
$0 \cdot H_{n+2} + 0$ via SymPy (this is the proof's key step); and
verifies Mathar's recurrence \eqref{eq:mathar} numerically for every
$n$ from $2$ up to $5000$.

\section{Acknowledgments}

The author declares no competing interests.

AI-assisted tools were used in the preparation of this manuscript,
including for drafting proof outlines and generating the symbolic
verification code. The author verified all mathematical claims
independently and takes full responsibility for the results.

\appendix

\section{Verifier source (machine-checkable)}\label{app:verifier}

The script referenced in \S\ref{sec:remarks} is reproduced in full
below. It depends only on SymPy; no Maple, Mathematica, or any other
external CAS license is required.

\subsection*{verify\_proof.py (closed form, e.g.f., symbolic reduction, numeric Mathar)}
\begin{lstlisting}[style=python]
"""Verify the proof of Mathar's 2020 conjectured recurrence for OEIS
A001711, the diagonal a(n) = T(n+4, 4) of a generalized Stirling
triangle (with offset 0, also written [n+4, 4]_3).

Proof outline (see the paper for details):

1. (Detlefs, 2010, restated.) The sequence has the closed form
       a(n) = ((n+3)! / 4) * (2 H_{n+3} - 3),
   where H_m = 1 + 1/2 + ... + 1/m. This follows from the e.g.f.
   F(x) = (1 - 3 log(1-x)) / (1-x)^4 listed on the OEIS page; see
   the paper for a self-contained derivation.

2. Substitute the closed form into Mathar's recurrence
       a(n) - (2n+5) a(n-1) + (n+2)^2 a(n-2) = 0,  n >= 2,
   factor out (n+1)! / 4, and reduce H_{n+3}, H_{n+1} to H_{n+2}
   via the elementary identities H_{k+1} - H_k = 1 / (k+1). The
   result is the polynomial identity
       (n+3) - (2n+5) + (n+2) = 0,
   which holds for all n.

This script checks both the closed form (against the OEIS b-file) and
Mathar's recurrence (numerically up to n = 5000 in exact rational
arithmetic), and verifies the symbolic identity above.
"""
from __future__ import annotations

from fractions import Fraction
from math import factorial

import sympy as sp


# ---------------------------------------------------------------------------
# closed form for a(n)
# ---------------------------------------------------------------------------

def harmonic(n: int) -> Fraction:
    """Exact rational n-th harmonic number H_n = 1 + 1/2 + ... + 1/n."""
    return sum((Fraction(1, k) for k in range(1, n + 1)), Fraction(0))


def a_closed_form(n: int) -> int:
    """a(n) = ((n+3)! / 4) * (2 H_{n+3} - 3)."""
    val = Fraction(factorial(n + 3), 4) * (2 * harmonic(n + 3) - 3)
    assert val.denominator == 1, n
    return val.numerator


# OEIS b-file values (first 20 entries; full b001711.txt available at
# https://oeis.org/A001711/b001711.txt).
OEIS_REFERENCE = [
    1, 7, 47, 342, 2754, 24552, 241128, 2592720, 30334320, 383970240,
    5231113920, 76349105280, 1188825724800, 19675048780800,
    344937224217600, 6386713749964800, 124548748102195200,
    2551797512248320000, 54804198761303040000, 1231237843834521600000,
]


def check_closed_form_matches_oeis() -> None:
    """The harmonic-number closed form reproduces the OEIS reference values."""
    for n, expected in enumerate(OEIS_REFERENCE):
        got = a_closed_form(n)
        if got != expected:
            print(f"FAIL closed-form: a({n}) = {got}, expected {expected}")
            return
    print(f"PASS closed-form vs OEIS: {len(OEIS_REFERENCE)} values match")


# ---------------------------------------------------------------------------
# Mathar's conjectured recurrence
# ---------------------------------------------------------------------------

def mathar_LHS_rational(n: int) -> Fraction:
    """LHS of Mathar's conjectured recurrence at index n, computed exactly
    in Fraction arithmetic via the harmonic-number closed form."""
    A = Fraction(factorial(n + 3), 4) * (2 * harmonic(n + 3) - 3)
    B = Fraction(factorial(n + 2), 4) * (2 * harmonic(n + 2) - 3)
    C = Fraction(factorial(n + 1), 4) * (2 * harmonic(n + 1) - 3)
    return A - (2 * n + 5) * B + (n + 2) ** 2 * C


def check_mathar_numerically(N: int = 5000) -> None:
    """Verify Mathar's recurrence holds for every n in 2..N."""
    fails = 0
    for n in range(2, N + 1):
        if mathar_LHS_rational(n) != 0:
            fails += 1
            if fails <= 3:
                print(f"FAIL mathar at n={n}")
    if fails == 0:
        print(f"PASS Mathar recurrence numerically: n = 2..{N} ({N - 1} values)")
    else:
        print(f"FAIL Mathar recurrence: {fails} failures in n = 2..{N}")


# ---------------------------------------------------------------------------
# symbolic reduction of Mathar's recurrence to a 3-term polynomial identity
# ---------------------------------------------------------------------------

def check_mathar_symbolically() -> None:
    """The proof's key step is: substitute the closed form into Mathar's
    recurrence, factor out (n+1)!/4, and use H_{n+3} = H_{n+2} + 1/(n+3)
    plus H_{n+1} = H_{n+2} - 1/(n+2). The resulting expression is a sum
    of an H_{n+2}-multiplier and an H-free remainder; both have to be 0
    as polynomials in n. SymPy verifies both.
    """
    n, H = sp.symbols("n H", real=True)

    # Substitute the closed form a(k) = (k+3)!/4 * (2 H_{k+3} - 3) and
    # divide through by (n+1)!/4. The factorial ratio (n+3)!/(n+1)! is
    # (n+2)(n+3); (n+2)!/(n+1)! is (n+2); (n+1)!/(n+1)! is 1.
    H_n_plus_3 = H + 1 / (n + 2) + 1 / (n + 3)  # H_{n+3} = H_{n+2} + 1/(n+3); but we want everything in H_{n+2}, so wait...
    # Convention: let H denote H_{n+2}. Then
    #   H_{n+3} = H + 1/(n+3),
    #   H_{n+2} = H,
    #   H_{n+1} = H - 1/(n+2).
    H_n_plus_3 = H + 1 / (n + 3)
    H_n_plus_2 = H
    H_n_plus_1 = H - 1 / (n + 2)

    # Closed-form values as multiples of (n+1)!/4:
    A_over = (n + 2) * (n + 3) * (2 * H_n_plus_3 - 3)
    B_over = (n + 2) * (2 * H_n_plus_2 - 3)
    C_over = (2 * H_n_plus_1 - 3)

    M_over = A_over - (2 * n + 5) * B_over + (n + 2) ** 2 * C_over
    expanded = sp.expand(M_over)

    # Extract H-coefficient and the H-independent remainder.
    poly = sp.Poly(expanded, H)
    coeff_H1 = poly.nth(1)
    coeff_H0 = poly.nth(0)
    coeff_H1_simp = sp.simplify(coeff_H1)
    coeff_H0_simp = sp.simplify(coeff_H0)
    if coeff_H1_simp == 0 and coeff_H0_simp == 0:
        print("PASS Mathar identity reduces to (n+3) - (2n+5) + (n+2) = 0 after H_{n+2}-substitution")
    else:
        print(f"FAIL: H coefficient = {coeff_H1_simp}, constant remainder = {coeff_H0_simp}")


# ---------------------------------------------------------------------------
# Sanity check on the e.g.f. closed form (alternative derivation route).
# ---------------------------------------------------------------------------

def check_egf_first_terms(N: int = 10) -> None:
    """The OEIS-listed e.g.f. F(x) = (1 - 3 log(1-x)) / (1-x)^4 should
    expand with [x^n] / (1/n!) = a(n) for the first N indices."""
    x = sp.symbols("x")
    F = (1 - 3 * sp.log(1 - x)) / (1 - x) ** 4
    series = sp.series(F, x, 0, N + 1).removeO()
    fails = 0
    for k in range(N):
        coeff = sp.simplify(series.coeff(x, k))
        a_from_egf = sp.simplify(coeff * sp.factorial(k))
        a_from_harmonic = a_closed_form(k)
        if int(a_from_egf) != a_from_harmonic:
            fails += 1
            print(f"FAIL e.g.f. coefficient: a({k}) = {a_from_egf} via EGF, {a_from_harmonic} via harmonic")
    if fails == 0:
        print(f"PASS e.g.f. F(x) = (1 - 3 log(1-x)) / (1-x)^4 matches harmonic closed form for n = 0..{N-1}")


if __name__ == "__main__":
    check_closed_form_matches_oeis()
    check_egf_first_terms(N=10)
    check_mathar_symbolically()
    check_mathar_numerically(N=5000)
\end{lstlisting}

\end{document}